\documentclass[12pt,reqno]{amsart}

\usepackage{amsthm}

\newtheorem{lemma}{Lemma}
\newtheorem{theorem}{Theorem}

\newtheorem{conjecture}{Conjecture}

\newtheorem*{theorem*}{Theorem}

\title{On a group ring identity related to the Alon-Jaeger-Tarsi conjecture}

\begin{document}

\author{J\'anos Nagy}

\address{Alfréd R\'enyi Institute of Mathematics.}

\email{janomo4@gmail.com}

\author{P\'eter P\'al Pach}

\address{HUN-REN Alfr\'ed R\'enyi Institute of Mathematics, Re\'altanoda utca 13--15., H-1053 Budapest,  Hungary;   \newline \hspace*{4mm}
MTA--HUN-REN RI Lend\"ulet ``Momentum'' Arithmetic Combinatorics Research Group, Re\'altanoda utca 13--15., H-1053 Budapest,  Hungary; \newline \hspace*{4mm} 
Department of Computer Science and Information Theory, Budapest University of Technology and Economics, M\H{u}egyetem rkp. 3., H-1111 Budapest, Hungary; \newline \hspace*{4mm}
Extremal Combinatorics and Probability Group (ECOPRO), Institute for Basic Science (IBS), Daejeon, South Korea.}
  
  \email{pachpp@renyi.hu}

\keywords{Alon-Jaeger-Tarsi conjecture, Combinatorial Nullstellensatz, group rings, polynomial method, arithmetic progressions, nowhere-zero vector}

\subjclass[2020]{Primary.  11B30, 15A03, 15B33, 11B25}

\begin{abstract}
In this note we formulate a conjecture about two group ring identities and prove that it would imply the Alon-Jaeger-Tarsi conjecture.
\end{abstract}

\maketitle

\linespread{1.2}


\section{Introduction}

The Alon-Jaeger-Tarsi conjecture \cite{Alon-Tarsi, Jaeger}, first proposed in 1981  is the following:

\begin{conjecture}[Alon-Jaeger-Tarsi]\label{conj-AJT}
For any field $\mathbb{F}$ with $|\mathbb{F}|\geq 4$ and any nonsingular matrix $M$ over $\mathbb{F}$, there is a vector $x$ such that both $x$ and $Mx$ have only nonzero entries.
\end{conjecture}

Alon and Tarsi \cite{Alon-Tarsi} reached this conjecture while trying to generalize some simple properties of sparse graphs to more general matroids. 
Note that the conjecture is trivial when $\mathbb{F}$ is an infinite field. Alon and Tarsi \cite{Alon-Tarsi} settled the case when $|\mathbb{F}|\geq 4$ is a {\it proper} prime power using the polynomial method from \cite{Alon}. However, the case $\mathbb {F}=\mathbb{F}_p$ (with $p\geq 5$ being a prime) remained open. Recently, we \cite{NP} proved the conjecture for sufficiently large primes, namely, when $61<p\ne 79$.

Our proof made use of group ring identities and 
we proposed the following conjecture:

\begin{conjecture}\label{41}
Let $p > 3$ be a prime and let $M$ be a nonsingular $n \times n$ matrix over $\mathbb{F}_p$ with
row vectors $a_1, \dots,a_n$, finally, let the standard basis vectors of $G:=\mathbb{F}_p^n$ be $e_1,\dots,e_n$.

\noindent
Then the $\mathbb{Z}$ group ring identity 
$$\prod_{1 \leq i \leq n} (1 - g^{e_i}) \cdot   \prod_{1 \leq i \leq n} (1 - g^{a_i})  = 0 \in \mathbb{Z}[G]$$
follows from the mod $p$ group ring identity 
$$\prod_{1 \leq i \leq n} (1 - g^{e_i}) \cdot   \prod_{1 \leq i \leq n} (1 - g^{a_i})  = 0 \in \mathbb{F}_p[G].$$
\end{conjecture}

When $p$ is sufficiently large ($61<p\ne 79$), in \cite{NP} we could prove both Conjecture~\ref{conj-AJT}~and~Conjecture~\ref{41}. However, the cases when $5\leq p\leq 61$ or $p=79$ remained open. 
In this note we show that the Alon-Jaeger-Tarsi conjecture would follow from Conjecture~\ref{41} in the case of these ``small'' primes, as well.

\begin{theorem}\label{thm-1}
For every prime $p>3$ Conjecture~\ref{41} implies the Alon-Jaeger-Tarsi conjecture.
\end{theorem}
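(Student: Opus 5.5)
The plan is to argue by contradiction: from a hypothetical counterexample to Conjecture~\ref{conj-AJT} over $\mathbb{F}_p$, build a second nonsingular matrix that violates Conjecture~\ref{41}.

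\textbf{Step 1 (character translation).} For $y\in G=\mathbb{F}_p^n$ let $\chi_y(g^v)=\omega^{\langle y,v\rangle}$, where $\omega$ is a primitive $p$-th root of unity. Applying $\chi_y$ to
$$P_M:=\prod_{i}(1-g^{e_i})\prod_i(1-g^{a_i})$$
gives
$$\chi_y(P_M)=\prod_i(1-\omega^{y_i})\prod_i(1-\omega^{(My)_i}).$$
This is nonzero exactly when both $y$ and $My$ are nowhere-zero. Since the characters separate $\mathbb{C}[G]$, we get: $P_M=0$ in $\mathbb{Z}[G]$ if and only if $M$ is a counterexample to Conjecture~\ref{conj-AJT}. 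In particular, a nowhere-zero pair $(x,Mx)$ forces $P_M\neq 0$ in $\mathbb{Z}[G]$.

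Conversely, vanishing in $\mathbb{Z}[G]$ implies vanishing mod $p$. So Conjecture~\ref{41} says exactly this: for every nonsingular $M$ over $\mathbb{F}_p$, $M$ satisfies Conjecture~\ref{conj-AJT} if and only if $P_M\neq 0$ in $\mathbb{F}_p[G]$. Consequently, Theorem~\ref{thm-1} reduces to showing that the existence of a counterexample $M$ yields some nonsingular $M'$ with $P_{M'}=0$ in $\mathbb{F}_p[G']$ but with a nowhere-zero pair $(x',M'x')$.

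\textbf{Step 2 (mod $p$ model).} Work in
$$\mathbb{F}_p[G]\cong \mathbb{F}_p[x_1,\dots,x_n]/(x_1^p,\dots,x_n^p),\qquad x_i=g^{e_i}-1.$$
In this model $1-g^{a}=1-\prod_j(1+x_j)^{a_j}$, and $(1-g^v)^p=0$ for every $v$. The augmentation ideal $I$ satisfies $I^{n(p-1)+1}=0$, and $P_M\in I^{2n}$.

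Direct sums behave well: $P_{M\oplus N}=P_M\otimes P_N$. Hence direct sums alone preserve both the $\mathbb{Z}$-vanishing and the $\mathbb{F}_p$-vanishing, and cannot separate them. The construction of $M'$ must therefore genuinely mix coordinates.

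\textbf{Step 3 (construction of $M'$).} Take a counterexample $M$ of minimal dimension $n$. The first choice is a block matrix on $\mathbb{F}_p^{kn}$ built from $M$, the identity and shifted copies, for instance
$$\begin{pmatrix} M & M-I\\ I & M\end{pmatrix},$$
or more generally a $k$-fold circulant of such blocks. The construction should meet two requirements:
\begin{itemize}
\item[(a)] One can write down an explicit nowhere-zero $x'$ with $M'x'$ nowhere-zero, using $p\ge 5$ to choose coordinates avoiding the few forbidden values. This gives $P_{M'}\neq0$ in $\mathbb{Z}[G']$.
\item[(b)] The mod $p$ product $P_{M'}$ factors, after a change of variables in $\mathbb{F}_p[G']$, into an expression containing $P_M$ in suitable variables times further factors, or else it lands in a power of $I$ that is zero.
\end{itemize}
For (b), the idea is to use the minimal counterexample to show that $P_M$ lies in $I^{m}$ for some $m$ larger than the trivial bound $2n$. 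This would follow because all characters kill $P_M$, so $P_M$ lies in the kernel of $\mathbb{Z}[G]\to\mathbb{C}[G]$ restricted to the image. Combined with the extra factors of $P_{M'}$, this pushes the degree past $kn(p-1)$.

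\textbf{Main obstacle.} The hard step is (b): showing that vanishing of $P_M$ in $\mathbb{Z}[G]$ (which gives vanishing of all characters) produces enough vanishing mod $p$ in $P_{M'}$. The natural thing to try first is to exploit the Frobenius relation $(1-g^v)^p=0$. One would arrange $M'$ so that sums of its row products can be reorganized into $p$-th powers modulo terms controlled by $P_M$, while keeping enough freedom to verify (a).
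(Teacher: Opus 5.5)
There is a genuine gap: the proposal sets up a correct reduction but never produces the matrix that would complete it, and the mechanism it suggests cannot work.

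\textbf{What is correct, and what is missing.} Your Step~1 is right. By the character computation, $P_M=0$ in $\mathbb{Z}[G]$ exactly when $M$ is a counterexample. So it suffices to manufacture, from a counterexample, some instance violating Conjecture~\ref{41}. The remarks in Step~2 are also true. But Step~3, where the proof has to happen, contains no argument. No $M'$ is fixed, and nothing is checked for your sample block matrix; its determinant is $\det(M^2-M+I)$, so it is not even nonsingular in general.

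\textbf{Why (b) fails as proposed.} The mechanism you suggest is empty. ``All characters kill $P_M$'' just says $P_M=0$ in $\mathbb{Z}[G]$, since $\mathbb{Z}[G]\to\mathbb{C}[G]$ is injective. Hence $P_M=0$ in $\mathbb{F}_p[G]$ and lies in every $I^m$, and minimality adds nothing. Nor is degree the issue. If $P_{M'}$ were a multiple of an embedded copy of $P_M$ through an identity valid in $\mathbb{Z}[G']$, then $P_{M'}=0$ in $\mathbb{Z}[G']$ and (a) would fail. What is needed is a relation that holds modulo $p$ but not over $\mathbb{Z}$, and your plan has no source for one.

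\textbf{Why (a) fails as proposed.} It cannot come from ``avoiding a few forbidden values''. A uniform construction has no nowhere-zero vectors to offer on the parts built from $M$; in the paper such a vector is supplied by a case hypothesis.

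\textbf{The missing idea: linear-coefficient extraction.} Let $X=1-g^{e_i}$ and let $G'$ be the $i$-th coordinate hyperplane, so $\mathbb{F}_p[G]\cong\mathbb{F}_p[G'][X]/(X^p)$. The coefficient of $X$ in $P_M$ gives $b_1=\prod_{j\neq i}(1-g^{e_j})\prod_{j}(1-g^{a'_j})=0$ in $\mathbb{F}_p[G']$, where $a'_j$ are the projected rows. This is a mod-$p$ identity that need not lift. The paper then splits into cases.

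\textbf{Case 1: $b_1\neq 0$ in $\mathbb{Z}[G']$.} Then there is $y\in G'$ with all $y_k\neq 0$ ($k\neq i$) and all $\langle a'_\ell,y\rangle\neq 0$. The paper glues each $e_j$ and each row of $M$ to a copy $v_t$ of $a'_1$ sitting in its own auxiliary copy of $G'$. This gives two bases $B_1,B_2$ of a space of dimension $2n^2-n$. Mod $p$ their product collapses to $\prod_t U_t\cdot P_M=0$, because each $U_t(1-g^{v_t})$ is a copy of $b_1$. Over $\mathbb{Z}$ the product is nonzero, using rescaled copies of $y$. This violates Conjecture~\ref{41}.

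\textbf{Case 2: $b_1=0$ in $\mathbb{Z}[G']$ for every $i$.} The steps are:
\begin{enumerate}
\item Lift to $\prod_{j\neq i}(1-g^{e_j})\prod_j(1-g^{a_j})=0$ in $\mathbb{Z}[G]$.
\item Apply the mod-$p$ derivation $\partial_i$ to $P_M$ and use invertibility of $M$ to delete a factor $1-g^{a_k}$.
\item Extract a linear coefficient again, giving a mod-$p$ identity for a nonsingular $(n-1)\times(n-1)$ submatrix.
\item Conclude with Conjecture~\ref{41} and the minimality of $n$.
\end{enumerate}

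So the instance violating Conjecture~\ref{41} is not one uniform $M'$. It depends on the case, and minimality of $n$ is used essentially, which your plan never does.
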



\section{Notations}\label{sec-not}

Throughout the paper we use the notation $[n]:=\{1,2,\dots,n\}$ and we denote by $[a,b]:=\{k\ |\ a\leq k\leq b,\ k\in\mathbb{Z} \}$  the set of integers between $a$ and $b$ included.

In the paper if we consider a group ring $R[G]$, where $R$ is an arbitrary ring and $G = \mathbb{F}_p^n$ for some $n \geq 1$ and $v \in \mathbb{F}_p^n$ is an arbitrary vector, then we use the multiplicative notation as follows. The \emph{group ring} $R[G]$ is the ring of formal expressions $\sum_{v\in G} r_v g^{v}$, where $r_v\in R$, and $g$ is a formal variable. (Here, the formal exponentiation $g^{v}$ is used to turn the additive structure of $G$ into a multiplicative one.) Addition and multiplication are defined in the natural way, that is, 
$$\left(\sum_{v\in G} r_v g^{v}\right)+\left(\sum_{v\in G} r'_v g^{v}\right)=\sum_{v\in G} (r_v+r_v') g^{v},$$ 
and 
$$\left(\sum_{v\in G} r_v g^{v}\right)\cdot\left(\sum_{v\in G} r'_v g^{v}\right)=\sum_{v\in G} \left(\sum_{w\in G} r_w r'_{v-w}\right)g^{v}.$$


\section{Proof of Theorem 1}\label{sec-small-primes}

In this section, we shed light on the structural importance of Conjecture~\ref{41} and demonstrate how the Alon-Jaeger-Tarsi (AJT) conjecture follows from the proposed equivalence of group ring identities. 

\medskip

\begin{proof}[Proof of Theorem 1]
For the sake of contradiction, let us assume that Conjecture~\ref{41} holds, but the Alon-Jaeger-Tarsi conjecture fails for some prime $p > 3$. 

\medskip

Let $n$ be the smallest dimension for which there exists a counterexample to the AJT conjecture, and let $M$ be  such a nonsingular $n\times n$ matrix (over $\mathbb{F}_p$) with row vectors $a_i$ ($i \in [n]$). Let $G := \mathbb{F}_p^n$ denote the underlying additive group.

\medskip

Based on the polynomial reformulation and the properties of counterexamples to the AJT conjecture \cite[Appendix B, Corollary 4]{NP}, we know that the following $\mathbb{Z}$ group ring identity must hold:
\begin{equation*}
\prod_{1 \leq j \leq n} (1 - g^{e_j}) \cdot \prod_{1 \leq j \leq n} (1 - g^{a_j}) = 0 \in \mathbb{Z}[G].
\end{equation*}
Consequently, by considering the coefficients modulo $p$, the corresponding identity in the $\mathbb{F}_p$ group ring also holds:
\begin{equation}\label{modp}
\prod_{1 \leq j \leq n} (1 - g^{e_j}) \cdot \prod_{1 \leq j \leq n} (1 - g^{a_j}) = 0 \in \mathbb{F}_p[G].
\end{equation}

\medskip

To achieve the desired contradiction through induction on the dimension, we first establish a reduction lemma that allows us to eliminate one of the factors.

\medskip

\begin{lemma}\label{lem:reduction}
Let $i\in [n]$ be a fixed index. Under the assumption that $M$ is a minimal counterexample, the following identity holds in the $\mathbb{Z}$ group ring:
\begin{equation*}
\prod_{\substack{1 \leq j \leq n,\\ j \neq i}} (1 - g^{e_j}) \cdot \prod_{1 \leq j \leq n} (1 - g^{a_j}) = 0 \in \mathbb{Z}[G].
\end{equation*}
\end{lemma}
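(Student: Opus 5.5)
The plan is to translate the identity into a statement about characters, and then use the minimality of $M$ to rule out the only configurations that could violate it. Since $\mathbb{Z}[G]\subset\mathbb{C}[G]$ and $\mathbb{C}[G]$ is semisimple with characters $\chi_x(g^v)=\zeta^{\langle x,v\rangle}$ for $x\in\mathbb{F}_p^n$, an element of $\mathbb{Z}[G]$ vanishes iff every character kills it. We have $\chi_x(1-g^{e_j})=1-\zeta^{x_j}$ and $\chi_x(1-g^{a_j})=1-\zeta^{\langle a_j,x\rangle}=1-\zeta^{(Mx)_j}$. Hence the original identity is equivalent to: for every $x$, some $x_j=0$ or some $(Mx)_j=0$. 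This is exactly the statement that $M$ is a counterexample. The identity of the lemma is equivalent to the following:
\begin{equation*}
(\ast)\qquad \text{there is no } x\in\mathbb{F}_p^n \text{ with } x_j\neq 0 \text{ for all } j\neq i \text{ and } Mx \text{ nowhere zero.}
\end{equation*}
Since $M$ is a counterexample, any such $x$ would have to satisfy $x_i=0$. So it suffices to show that no $x$ with $x_i=0$, all other $x_j\neq 0$, and $Mx$ nowhere zero can exist.

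Suppose such an $x$ exists, and put $c:=Me_i$, the $i$-th column of $M$. For every $t\in\mathbb{F}_p^*$ the vector $x+te_i$ is nowhere zero. Because $M$ is a counterexample, $Mx+tc$ must then have a zero coordinate, i.e.\ $t=-(Mx)_k/c_k$ for some $k$ with $c_k\neq 0$. Thus the at most $n$ values $-(Mx)_k/c_k$ (for $c_k\neq 0$) must cover all of $\mathbb{F}_p^*$. When $n<p-1$ this is already a contradiction. Otherwise I would bring in minimality: delete row $k$ and column $i$ of $M$, choosing $k$ so that the resulting $(n-1)\times(n-1)$ minor $M'$ is nonsingular. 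This is possible since $c\neq 0$ and $M$ is nonsingular (expand $\det M$ along column $i$). By minimality $M'$ is not a counterexample, so there is $y\in(\mathbb{F}_p^*)^{n-1}$ with $M'y$ nowhere zero. Lifting $y$ to $x=(y,0 \text{ at } i)$, we get $Mx$ nonzero in every row except possibly row $k$. We then use the free coordinate $x_i=t$ to repair row $k$ while keeping the other rows nonzero.

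The main obstacle is this last counting step. We must make all $n$ linear forms $(Mx)_j+tc_j$ and $t$ simultaneously nonzero, while each one forbids a single value of $t$, and $n$ may exceed $p-1$. To get around it, I would first try to exploit the freedom in choosing both $y$ and the deleted row $k$. Concretely, I would apply the Combinatorial Nullstellensatz in the style of \cite{NP} to the polynomial $t\prod_j(\ell_j(y)+tc_j)$ over the grid $(\mathbb{F}_p^*)^{n-1}\times\mathbb{F}_p^*$. If that fails, I would fall back on the structural properties of minimal counterexamples recorded in \cite[Appendix B]{NP}: the same analysis that gives Corollary 4 there should show that a minimal counterexample remains a counterexample after any coordinate is allowed to be zero, which is precisely $(\ast)$. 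Once $(\ast)$ is established, the character criterion gives the claimed identity in $\mathbb{Z}[G]$.
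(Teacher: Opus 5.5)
You reduce the problem correctly, but the hard case, $n\ge p-1$, is left unproved, and the way you propose to handle it would prove the open AJT conjecture outright.

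\textbf{What is right.} Via characters, the lemma is equivalent to $(\ast)$. Since $M$ is a counterexample, $(\ast)$ is equivalent to the non-existence of $x$ with $x_i=0$, $x_j\neq 0$ for $j\neq i$, and $Mx$ nowhere zero. Read through the characters of $G'=\langle e_j : j\neq i\rangle$, that is exactly the $\mathbb{Z}[G']$-identity \eqref{eq2}, which the paper also isolates as its crucial claim. Your character argument for passing from \eqref{eq2} back to the lemma is valid, and shorter than the paper's argument with the ideal $\langle 1-g^{e_i}\rangle$ and the kernel of $(1-g^{e_i})^2$. Your counting argument correctly handles $n\le p-2$.

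\textbf{Where it fails.} For $n\ge p-1$ you have no proof, and the gap is not technical. Your ``otherwise'' branch never uses the hypothetical $x$. You start from a good $y$ for the minor and look for $x_i=t$ with $t\prod_j(\ell_j(y)+tc_j)\neq 0$; that is an attempt to show, from minimality alone, that a minimal counterexample is not a counterexample. Applying the Combinatorial Nullstellensatz to that polynomial on $(\mathbb{F}_p^*)^{n}$ is just the polynomial method for AJT applied to $M$ itself. If either step worked, then together with the easy case $n\le p-2$ it would prove AJT unconditionally for every $p>3$. That is exactly what is open for $5\le p\le 61$ and $p=79$. The appeal to \cite[Appendix B]{NP} simply asserts what has to be proved. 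In this paper that reference is used only to get the $\mathbb{Z}$-identity for $M$ itself, which is the character reformulation you already have.

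\textbf{The missing ingredient.} You need the standing hypothesis of the surrounding contradiction argument: that Conjecture~\ref{41} holds. The paper does not prove the lemma from minimality alone; it argues as follows.
\begin{enumerate}
\item Write $1-g^{a_j}=(1-g^{a'_j})+(1-g^{e_i})y_j$ and expand the mod $p$ identity \eqref{modp} in powers of $1-g^{e_i}$, using $(1-g^{e_i})^p=0$.
\item The linear coefficient gives the mod $p$ version $b_1=0$ of \eqref{eq2} in $\mathbb{F}_p[G']$.
\item Conjecture~\ref{41} cannot lift $b_1=0$ to $\mathbb{Z}$ directly. The vectors $\{e_j\}_{j\neq i}\cup\{a'_j\}_{j\in[n]}$ are $2n-1$ vectors in an $(n-1)$-dimensional space, not two bases.
\item So the paper embeds this configuration into a pair of bases $B_1,B_2$ of a space $V'$ of dimension $2n^2-n$.
\item The mod $p$ product over $B_1\cup B_2$ vanishes. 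Each $U_t(1-g^{v_t})$ is a copy of $b_1$, so after expanding, all that survives is $\prod_t U_t$ times the original identity for $M$.
\item If your $x$ existed, it would give a nowhere-zero covector for the $2n-1$ vectors. Rescaling it separately in each block shows that the $\mathbb{Z}$-product over $B_1\cup B_2$ is nonzero.
\item This contradicts Conjecture~\ref{41}. Hence no such $x$ exists, which is precisely the case of $(\ast)$ you could not handle.
\end{enumerate}
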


\medskip

\begin{proof}
Let us write each row vector as $a_j = a'_j + a_{j, i} e_i$, where $a'_j$ belongs to the subgroup $G' := \langle e_1, \dots, e_{i-1}, e_{i+1}, \dots, e_n \rangle \cong \mathbb{F}_p^{n-1}$. 

\medskip

In the group ring $\mathbb{Z}[G]$, for each $j \in [n]$, the term $(1 - g^{a_j})$ can be expanded as:
\[ 1 - g^{a_j} = (1 - g^{a'_j}) + g^{a'_j}(1 - g^{a_{j,i}e_i}). \]
Since $(1 - g^{ke_i})$ is always divisible by $(1 - g^{e_i})$ in the group ring, we can write $1 - g^{a_j} = (1 - g^{a'_j}) + (1 - g^{e_i}) \cdot y_j$ for some elements $y_j \in \mathbb{Z}[G]$.

\medskip

Substituting these expressions into the mod $p$ identity \eqref{modp}, we obtain:
\begin{equation*}
B := \prod_{1 \leq j \leq n} (1 - g^{e_j}) \cdot \prod_{1 \leq j \leq n} ((1 - g^{a'_j}) + (1 - g^{e_i}) \cdot y_j) = 0 \in \mathbb{F}_p[G].
\end{equation*}

\medskip

The element $B$ can be viewed as a polynomial in the variable $(1 - g^{e_i})$. Since $(1 - g^{e_i})^p = 1 - g^{pe_i} = 0$ in $\mathbb{F}_p[G]$, we can write:
\[ 0 = B = \sum_{1 \leq k \leq p-1} b_k (1 - g^{e_i})^k, \]
where each coefficient $b_k$ lies in the group ring $\mathbb{F}_p[G']$. 

\medskip

By examining the linear term (the coefficient of $(1 - g^{e_i})^1$), and noting that the first product $\prod_{j} (1 - g^{e_j})$ already contains one factor of $(1 - g^{e_i})$, we find:
\[ b_1 = \prod_{\substack{1 \leq j \leq n,\\ j \neq i}} (1 - g^{e_j}) \cdot \prod_{1 \leq j \leq n} (1 - g^{a'_j}) \in \mathbb{F}_p[G']. \]
Since the expansion of $B$ must vanish, the linear independence of the powers $(1-g^{e_i})^k$ ($k=0,1\dots,p-1$) yields, in particular, that $b_1 = 0 \in \mathbb{F}_p[G']$.

\medskip

Now, our crucial claim is that this mod $p$ identity over $G'$ implies the corresponding identity over $\mathbb{Z}$:
\begin{equation}\label{eq2}
\prod_{\substack{1 \leq j \leq n,\\ j \neq i}} (1 - g^{e_j}) \cdot \prod_{1 \leq j \leq n} (1 - g^{a'_j}) = 0 \in \mathbb{Z}[G'].   
\end{equation}

\medskip

Notice that this implication is structurally similar to the statement of Conjecture~\ref{41}. However, we cannot obtain it directly, since the vectors $\{a'_j\}_{j \in [n]}$ do not form a basis of the subgroup $G' \cong \mathbb{F}_p^{n-1}$; indeed, there is one additional vector in this set.

\medskip

Assume to the contrary that the $\mathbb{Z}$ group ring identity does not hold:
\begin{equation*}
\prod_{\substack{1 \leq j \leq n,\\ j \neq i}} (1 - g^{e_j}) \cdot   \prod_{1 \leq j \leq n} (1 - g^{a'_j}) \neq 0 \in \mathbb{Z}[G'].
\end{equation*}
Based on this assumption and our original counterexample $M$ to the Alon-Jaeger-Tarsi conjecture, we will construct a counterexample to Conjecture~\ref{41} in a higher-dimensional setting.

\medskip

By reindexing the rows if necessary, we may assume without loss of generality that the subset $\langle a'_2, \dots, a'_n \rangle$ generates the subgroup $G'$.

\medskip

Let us consider a vector space $V'$ over $\mathbb{F}_p$ defined by the following basis vectors: 
\[ V' = \langle e_1, \dots, e_n , e_{j, k} \mid j \in [2n], k \in [n], k \neq i \rangle. \] 
Note that the dimension of this space is $\dim(V') = 2n^2 - n$. 

\medskip

We shall now construct two specific bases, $B_1$ and $B_2$, for the space $V'$. 

\noindent
To simplify our expressions, let us write $v_j: = \sum\limits_{\substack{1 \leq k \leq n, \\ k \neq i}} a'_{1, k} e_{j, k}$ for $j \in [2n]$, and let $w_j := \sum\limits_{1 \leq k \leq n} a_{j, k} e_{k}$ denote the original row vectors for $j \in [n]$.

\medskip

\noindent
First, we define $B_1$ as the following disjoint union:
\[ B_1 = B'_1 \cup \bigcup\limits_{1 \leq t \leq 2n} B'_{1, t}, \]
where the primary part is given by
\[ B'_1 = \left\{ e_j + v_j \ |\ j \in [n] \right\}, \]
and the auxiliary components for each $t \in [2n]$ are
\[ B'_{1, t} = \{ e_{t, k} \mid k \in [n], k \neq i \}. \]

\medskip

\noindent
\medskip

Similarly, we define the second basis $B_2$ of $V'$ as the disjoint union $B_2 = B'_2 \cup \bigcup\limits_{1 \leq t \leq 2n} B'_{2, t}$, where the primary components are
\[ B'_2= \left\{ \sum\limits_{1 \leq k \leq n} a_{j-n, k} e_{k} + v_j \ |\ j \in [n+1, 2n] \right\}, \]
and the auxiliary parts for each $t \in [2n]$ are given by
\[ B'_{2, t} = \left\{ \sum\limits_{\substack{1 \leq k \leq n, \\ k \neq i}} a'_{\ell, k} e_{t, k} \ \Big|\ \ell \in [2, n] \right\}. \]

\medskip

\noindent
It can be easily checked that $B_1$ and $B_2$ are indeed two bases of the vector space $V'$.

\medskip

First, we claim that the following identity holds in the mod $p$ group ring:
\[ \prod_{b \in B_1} (1-g^b) \cdot \prod_{b \in B_2} (1-g^b) = 0 \in \mathbb{F}_p[V']. \]

\medskip

\noindent
For each index $t \in [2n]$, let us introduce the following notation:
\[ U_t := \prod\limits_{b \in B'_{1, t}} (1-g^b) \cdot \prod\limits_{b \in B'_{2, t}} (1-g^b). \]

\medskip

\noindent
By the properties of the vectors $a'_1, \ldots, a'_n, e_1, \ldots, e_{i-1}, e_{i+1}, \ldots, e_n$, we know that for each $t \in [2n]$, the identity $U_t \cdot (1 - g^{v_t}) = 0 \in \mathbb{F}_p[V']$ is satisfied. (This identity indeed holds, since the vector configuration occurring in $U_t \cdot (1 - g^{v_t})$ is equivalent to the vector configuration from \eqref{eq2}.)

\medskip

Observe that for any $j \in [n]$, we can decompose the terms as:
\[ 1 - g^{e_j + v_j} = (1 - g^{e_j}) + (1- g^{v_j}) z_j, \]
where $z_j=g^{e_j} \in \mathbb{F}_p[V']$.

\medskip

Similarly, for each $j \in [n]$, we can write:
\[ 1 - g^{w_j + v_{j+n}} = (1 - g^{w_j}) + (1- g^{v_{j+n}}) r_j, \]
where $r_j=g^{w_j} \in \mathbb{F}_p[V']$.

\medskip

It follows from these equations and the distributive properties of the group ring that:
\begin{equation*}
 \prod_{b \in B_1} (1-g^b) \cdot \prod_{b \in B_2} (1-g^b) = \prod_{1 \leq t \leq 2n} U_t \cdot \prod_{1 \leq j \leq n}(1 - g^{e_j}) \cdot \prod_{1 \leq j \leq n}(1 - g^{w_j}) \in \mathbb{F}_p[V'].
\end{equation*}

\medskip

\noindent
On the other hand, since $M$ is a counterexample to the Alon-Jaeger-Tarsi conjecture, we know that 
\[ \prod_{1 \leq j \leq n}(1 - g^{e_j}) \cdot \prod_{1 \leq j \leq n}(1 - g^{w_j}) = 0 \in \mathbb{F}_p[V'], \]
which indeed yields that
\[ \prod_{b \in B_1} (1-g^b) \cdot \prod_{b \in B_2} (1-g^b) = 0 \in \mathbb{F}_p[V']. \]

\medskip

\noindent
\emph{Next, we will prove that, on the other hand, we have: 
\[ \prod_{b \in B_1} (1-g^b) \cdot \prod_{b \in B_2} (1-g^b) \neq 0 \in \mathbb{Z}[V']. \] }

\medskip

For this, we have to show that there exists a vector $x \in V'$ such that $\langle x, b \rangle \neq 0$ for every $b \in B_1 \cup B_2$, where $\langle \cdot, \cdot \rangle$ denotes the standard scalar product corresponding to the basis vectors $e_1, \dots, e_n$ and $e_{j, k}$.

\medskip

Let us fix the values of $x_1, \dots, x_n$ in an arbitrary way. We show that we can extend these with values of $x_{j, k}$ in a proper way such that the conditions hold.

\medskip

Indeed, we obtain the following conditions for the values $x_{j,k}$ for some $c_j\in\mathbb{F}_p$:\\
for each $j\in [2n]$
\[ x_{j,k}\ne 0 \quad \text{for each } k\in[n],\ k\ne i \leq n,
\]
\[ \sum_{k \neq i} a'_{\ell, k} x_{j, k} \neq 0 \quad \text{for each } 2 \leq \ell \leq n,\]
\[ \sum_{k \neq i} a'_{1, k} x_{j, k} \neq c_j.  \]

\medskip

Now, from our assumptions on the vectors $a'_1, \dots, a'_n, e_1, \dots, e_{i-1}, e_{i+1}, \dots, e_n$, by using \cite[Lemma~17]{Szegedy} we can find a vector $y \in \langle e_{j, k} \mid k \in [n], k \neq i \rangle$ such that the first two types of conditions hold and 
\[ \sum\limits_{1 \leq k \leq n, k \neq i} a'_{1, k} y_{j, k} \neq 0. \] 

\medskip

If $c_j = 0$, then we can choose $x_{j, k} = y_{j, k}$, and if $c_j \neq 0$, then we can choose $x_{j, k} = \lambda_j \cdot y_{j, k}$ with some appropriate $0 \neq \lambda_j \in \mathbb{F}_p$.

\medskip

Thus, we have established that
\[ \prod\limits_{b \in B_1} (1-g^b) \cdot \prod\limits_{b \in B_2} (1-g^b) \neq 0 \in \mathbb{Z}[V'], \]
which directly contradicts our assumption that Conjecture~\ref{41} holds.

\medskip

We have therefore proven that 
\[ \prod_{\substack{1 \leq j \leq n,\\ j \neq i}} (1 - g^{e_j}) \cdot   \prod_{1 \leq j \leq n} (1 - g^{a'_j}) = 0 \in \mathbb{Z}[G']. \]

\medskip

\noindent
Let us define the product:
\[ Q := \prod_{\substack{1 \leq j \leq n,\\ j \neq i}} (1 - g^{e_j}) \cdot   \prod_{1 \leq j \leq n} (1 - g^{a_j}). \] 

\medskip

Observe that $Q \in \langle 1 - g^{e_i} \rangle \subseteq \mathbb{Z}[G]$, where $\langle 1 - g^{e_i} \rangle$ is the principal ideal generated by the element $1 - g^{e_i} \in \mathbb{Z}[G]$. This follows from identity~\eqref{eq2} by using the projection $G\to G'$ (in the direction of $e_i$). 
Consequently, we may write $Q = q \cdot (1 - g^{e_i})$.

\medskip

Now, observe that we have $Q \cdot (1 - g^{e_i}) = 0$, which means that $q \cdot (1 - g^{e_i})^2 = 0$.

\medskip

However, in the group ring $\mathbb{Z}[G]$, the kernel of the multiplication by the element $(1 - g^{e_i})^2$ is identical to the kernel of the multiplication by the element $(1 - g^{e_i})$.

\medskip

Indeed, for an element $\sum\limits_{v\in G} c_vg^v$, both the equation $(\sum\limits_{v\in G} c_vg^v)(1-g^{e_i})=0$ and the equation $(\sum\limits_{v\in G} c_vg^v)(1-g^{e_i})^2=0$ is equivalent to the condition that $c_v=c_{v-e_i}$ for every $v\in G$.

\end{proof}

\bigskip

\medskip

Now, let us return to the proof of Theorem~\ref{thm-1}. We know that the following identity holds in the $\mathbb{F}_p$ group ring:
\begin{equation}\label{eq1}
\prod_{1 \leq j \leq n} (1 - g^{e_j}) \cdot   \prod_{1 \leq j \leq n} (1 - g^{a_j})  = 0 \in \mathbb{F}_p[G].
\end{equation}

\medskip

\noindent
For an element $x = \sum\limits_{v \in \mathbb{F}_p^n} x_v g^v \in \mathbb{F}_p[G]$ and an index $i \in [n]$, let us define the operation:
\[ \partial_i(x) := \sum\limits_{v \in \mathbb{F}_p^n} x_v \cdot v_i \cdot g^v \in \mathbb{F}_p[G]. \]

\medskip

\noindent
It is easy to check that $\partial_i$ satisfies the property $\partial_i(x \cdot y) = \partial_i(x) \cdot y + x \cdot \partial_i(y)$ if $x, y \in \mathbb{F}_p[G]$.

\medskip

\noindent
If we apply the partial derivative $\partial_i$ to \eqref{eq1}, we obtain that
\begin{equation*}
  g^{e_i} \prod_{\substack{1 \leq j \leq n ,\\ j \neq i}} (1 - g^{e_j}) \cdot \prod_{1 \leq j \leq n} (1 - g^{a_j})  + \sum_{1 \leq k \leq n} a_{k, i}g^{a_k} \cdot \prod_{1 \leq j \leq n} (1 - g^{e_j}) \cdot   \prod_{\substack{1 \leq j \leq n,\\ j \neq k}} (1 - g^{a_j}) = 0 \in \mathbb{F}_p[G].
\end{equation*}

\medskip

\noindent
On the other hand, by our main lemma (Lemma~\ref{lem:reduction}), we know that the first term vanishes:
\[ \prod_{\substack{1 \leq j \leq n ,\\ j \neq i}} (1 - g^{e_j}) \cdot \prod_{1 \leq j \leq n} (1 - g^{a_j}) = 0 \in \mathbb{F}_p[G], \]
so we obtain for every $i \in [n]$ that:
\begin{equation*}
 \sum_{1 \leq k \leq n} a_{k, i}g^{a_k} \cdot \prod_{1 \leq j \leq n} (1 - g^{e_j}) \cdot   \prod_{\substack{1 \leq j \leq n,\\ j \neq k}} (1 - g^{a_j}) = 0 \in \mathbb{F}_p[G].
\end{equation*}

\medskip

\noindent
Since the matrix $M$ is invertible, its columns span the entire space, which implies that
\begin{equation*}
g^{a_k}\prod_{1 \leq j \leq n} (1 - g^{e_j}) \cdot   \prod_{2 \leq j \leq n} (1 - g^{a_j}) = 0 \in \mathbb{F}_p[G].
\end{equation*}

\medskip

Therefore, $u:=\prod_{1 \leq j \leq n} (1 - g^{e_j}) \cdot   \prod_{2 \leq j \leq n} (1 - g^{a_j}) = 0 \in \mathbb{F}_p[G]$, since $g^{a_k}$ is invertible.

\medskip

We may assume (by reindexing, if necessary) that the determinant of the matrix obtained from $M$ by deleting its first row and first column is nonzero.

\medskip

\noindent
Let us write $a_j = a_{j, 1} e_1 + a'_j$, then $a'_2, \ldots, a'_n$ is a basis of the subgroup $H := \langle e_2, \dots, e_n\rangle$.

\medskip

\noindent
Since for $j \in [2, n]$ we have the expansion $1 - g^{a_j} = (1 - g^{a'_j}) + (1-g^{e_1}) y_j$, we obtain that
\begin{equation*}
u = \prod_{1 \leq j \leq n} (1 - g^{e_j}) \cdot \prod_{2 \leq j \leq n} \left((1 - g^{a'_j}) + (1-g^{e_1}) y_j\right) = 0 \in \mathbb{F}_p[G].
\end{equation*}

\medskip

\noindent
We may write $0 = u = \sum\limits_{1 \leq k \leq p-1} (1 - g^{e_1})^k u_k$, where each $u_k$ belongs to $\mathbb{F}_p[H]$. 

\medskip

\noindent
On the other hand, by examining the coefficient of the first power $(1-g^{e_1})$, we find that
\[ u_1 = \prod\limits_{2 \leq j \leq n} (1 - g^{e_j}) \cdot \prod\limits_{2 \leq j \leq n} (1 - g^{a'_j}) = 0 \in \mathbb{F}_p[H]. \]

\medskip

\noindent
Hence, finally, we obtain the identity in the lower-dimensional group ring:
\begin{equation*}
\prod_{2 \leq j \leq n} (1 - g^{e_j}) \cdot \prod_{2 \leq j \leq n} (1 - g^{a'_j}) = 0 \in \mathbb{F}_p[H].
\end{equation*}

\medskip

\noindent
However, by using again the statement of Conjecture~\ref{41} for the nonsingular $(n-1)\times(n-1)$ submatrix (whose rows are $a_2',\dots,a_n'$), we would obtain a counterexample to the Alon-Jaeger-Tarsi conjecture with a smaller dimension than $n$. This contradicts our original choice of $n$ as the minimal dimension for a counterexample.

\medskip

\noindent
This contradiction finishes the proof of Theorem~\ref{thm-1}.
\end{proof}

\section{Acknowledgements} 
Both authors were supported by the National Research, Development and Innovation Office NKFIH (Excellence program, Grant Nr. 153829). PPP was also partially supported by the Institute for Basic Science (IBS-R029-C4).

\end{document}